\documentclass[a4paper,12pt,reqno]{amsart}
\usepackage{marginnote}
\usepackage{amsmath}
\usepackage{mathtools}
\usepackage{amsfonts}
\usepackage{amscd}
\usepackage{amssymb}
\usepackage{graphicx}
\usepackage{mathrsfs}
\usepackage{dsfont}
\usepackage{tikz}

\usepackage[dvips,all,arc,curve,color,frame]{xy}
\usepackage{enumitem}

\usepackage[colorlinks]{hyperref}
\usepackage[nameinlink]{cleveref}
\usepackage{tikz,mathrsfs}
\usetikzlibrary{arrows,decorations.pathmorphing,decorations.pathreplacing,positioning,shapes.geometric,shapes.misc,decorations.markings,decorations.fractals,calc,patterns}

\addtolength{\hoffset}{-0.5cm} \addtolength{\textwidth}{1cm}
\addtolength{\voffset}{-1.5cm} \addtolength{\textheight}{2cm}

\newtheorem{thm}{Theorem}[section]
\newtheorem{prop}[thm]{Proposition}
\newtheorem{lemma}[thm]{Lemma}
\newtheorem{claim}[thm]{Claim}
\newtheorem{defin}[thm]{Definition}
\newtheorem{cor}[thm]{Corollary}

\newtheorem{obs}[thm]{Observation}
\newtheorem{str}[thm]{Strategy}

\theoremstyle{definition}

\begin{document}
\author{Vojt\v{e}ch Dvo\v{r}\'ak}
\address[Vojt\v{e}ch Dvo\v{r}\'ak]{Department of Pure Maths and Mathematical Statistics, University of Cambridge, UK}
\email[Vojt\v{e}ch Dvo\v{r}\'ak]{vd273@cam.ac.uk}

\title[Waiter-Client Triangle-Factor Game on the Edges of $K_n$]{Waiter-Client Triangle-Factor Game on the Edges of the Complete Graph
}


\begin{abstract} 
Consider the following game played by two players, called Waiter and Client, on the edges of $K_n$ (where $n$ is divisible by $3$). Initially, all the edges are unclaimed. In each round, Waiter picks two yet unclaimed edges. Client then chooses one of these two edges to be added to Waiter's graph and one to be added to Client's graph. Waiter wins if she forces Client to create a $K_3$-factor in Client's graph at some point, while if she does not manage to do that, Client wins. 

It is not difficult to see that for large enough $n$, Waiter has a winning strategy. The question considered by Clemens et al. is how long the game will last if Waiter aims to win as soon as possible, Client aims to delay her as much as possible, and both players play optimally. Denote this optimal number of rounds by $\tau_{WC}(\mathcal{F}_{n,K_3-\text{fac}},1 )  $. Clemens et al. proved that $\frac{13}{12}n \leq \tau_{WC}(\mathcal{F}_{n,K_3-\text{fac}},1 ) \leq \frac{7}{6}n+o(n) $, and conjectured that $\tau_{WC}(\mathcal{F}_{n,K_3-\text{fac}},1 ) = \frac{7}{6}n+o(n) $. In this note, we verify their conjecture.
\end{abstract}

\maketitle

\section{Introduction} 

\textit{Positional games} are a large class of two-player combinatorial games characterized by the following setting. We have a (usually finite) set $X$, called a \textit{board}; a family of subsets of $X$, called \textit{winning sets}; and a rule determining which player wins the game. These games attracted wide attention, starting with the papers of Hales and Jewett \cite{hales2009regularity} and Erd\H{o}s and Selfridge \cite{erd1973and}. We refer the reader interested in positional games to the book of Hefetz, Krivelevich, Stojakovi{\'c} and Szab{\'o} \cite{hefetz2014positional}.

Probably the most studied type of the positional games are the so-called \textit{Maker-Breaker games}. These games are played by two players, called Maker and Breaker, as follows. We are given an integer $b \geq 1$, a set $X$ and a family of winning sets $\mathcal{F}$ of the subsets of $X$. The players alternately claim previously unclaimed elements of $X$, with Maker claiming one element in each round and Breaker claiming $b$ elements in each round. Maker wins if he manages to claim any $F \in \mathcal{F}$, while Breaker wins if she prevents Maker from doing so. We are usually interested in the two main questions: which player has a winning strategy for given $X,b,\mathcal{F}$? And if Maker has a winning strategy, how fast can he win? Many variants of the Maker-Breaker games were considered, see for instance \cite{ben2012hitting,corsten2020odd,duchene2020maker,hefetz2009fast,muller2014threshold,nenadov2016threshold}. 

Another rather similar type of the well studied positional games are the \textit{Waiter-Client games}. These games are played by two players, called Waiter and Client, in the following manner. As in the Maker-Breaker games, we are given an integer $b \geq 1$, a set $X$ and a family of winning sets $\mathcal{F}$ of the subsets of $X$. In each round, Waiter picks $b+1$ previously unclaimed elements of $X$ and offers them to Client. Client chooses one of these elements and adds it to his graph, while the remaining $b$ elements become a part of Waiter's graph. Waiter wins if she forces Client to create a winning set $F \in \mathcal{F}$ in Client's graph. If Client can prevent that, he wins. Once again, the questions that interest us are: for given $X,b,\mathcal{F}$, which player has a winning strategy? And if Waiter can win, how fast can she guarantee her victory to be? These problems are well studied in many cases, see for instance \cite{bednarska2016manipulative,hefetz2016waiter,hefetz2017waiter,krivelevich2018waiter,tan2017waiter}. 



Assume that for our triple $X,b,\mathcal{F}$, Waiter wins the corresponding Waiter-Client game. Then we will denote by $\tau_{WC}(\mathcal{F},b)$ the number of rounds of the game when Waiter tries to win as fast as possible, Client tries to slow her down as much as possible, and they both play optimally. What the ground set $X$ is will usually be clear from the context.

When $b=1$, we call the corresponding Waiter-Client game \textit{unbiased}. Recently, Clemens et al. \cite{clemens2020fast} studied several unbiased Waiter-Client games played on the edges of the complete graph, i.e. with $X=E(K_n)$. For $n$ divisible by $3$, they considered the triangle-factor game, where the winning sets are the collections of $\frac{n}{3}$ vertex disjoint triangles. It is not hard to verify that for $n$ large enough, Waiter can win this game. Moreover, Clemens et al. obtained the following theorem giving the lower and upper bounds on the optimal duration of the game.

\begin{thm}\label{mainclemens}
Assume $n$ is divisible by $3$ and large enough that Waiter wins the corresponding unbiased triangle-factor game on the edges of $K_n$. Then $$\frac{13}{12}n \leq \tau_{WC}(\mathcal{F}_{n,K_3-\text{fac}},1 ) \leq \frac{7}{6}n+o(n). $$
\end{thm}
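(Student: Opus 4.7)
The plan is to prove the upper and lower bounds in Theorem~\ref{mainclemens} separately.

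\emph{Upper bound $\tau_{WC}(\mathcal{F}_{n,K_3-\text{fac}},1) \leq \tfrac{7}{6}n+o(n)$.} Waiter's strategy is naturally based on a local decomposition. I would partition $V(K_n)$ into $n/6$ pairwise disjoint hextets of $6$ vertices (handling $n \bmod 6$ via the $o(n)$ slack), and within each hextet $U$, Waiter plays a self-contained game on the $K_6$ induced by $U$, aiming to force two vertex-disjoint triangles into Client's graph restricted to $U$. If she can achieve this in at most $7$ rounds per hextet, then $\tfrac{7n}{6}$ rounds suffice overall. The key sub-claim is therefore a small-case statement on $K_6$: Waiter can force a $K_3$-factor of $K_6$ in Client's graph within $7$ rounds. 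I would verify this by exhibiting an explicit pairing strategy together with a short case analysis on Client's responses, in which Waiter alternates between cherry-building offers (pairing two potential edges of the same triangle) and triangle-closing offers (pairing two closing edges, one from each target triangle).

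\emph{Lower bound $\tau_{WC}(\mathcal{F}_{n,K_3-\text{fac}},1) \geq \tfrac{13}{12}n$.} Since a $K_3$-factor has $n$ edges, the content of the bound is that Client can force at least $n/12$ ``wasted'' rounds on top of the trivial $n$. I would use a potential-function argument. Assign to each edge in Client's graph a weight reflecting its usefulness toward completing a $K_3$-factor (for instance, high weight for an edge that extends a cherry already present in $G_C$, smaller weight for an ``isolated'' edge), and let $\phi(G_C)$ be the total weight. Client's strategy is to always pick, out of the two offered edges, the one whose addition increases $\phi$ less. The goal is then twofold: (i) show that, against any Waiter strategy, Client can keep $\phi(G_C) \leq \alpha r$ for some constant $\alpha < 12/13$; and (ii) show that any graph on $n$ vertices containing a $K_3$-factor must satisfy $\phi \geq n$. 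Combining these gives $r \geq n/\alpha \geq \tfrac{13n}{12}$.

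\emph{Main obstacles.} For the upper bound, the $K_6$ strategy is tight: $7$ rounds claim $14$ out of $15$ edges, so the pairings must be chosen so that, whichever edge Client accepts, Waiter is still able to present a pair of closing edges for both target triangles later on; the case analysis, while finite, is genuinely delicate. For the lower bound, the main challenge is calibrating the weight function: too coarse a choice (e.g.\ equal weight per edge) recovers only the trivial $\tau \geq n$, while a finer choice must be robust to Waiter adaptively choosing offers based on Client's current graph, and in particular must account for the fact that Waiter may force Client to take edges of high weight by pairing them with edges of even higher weight.
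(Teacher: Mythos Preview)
This theorem is not proved in the present paper at all; it is quoted as the prior result of Clemens, Hamann, Mogge and Parczyk~\cite{clemens2020fast}, and the paper's own contribution is Theorem~\ref{major}, which sharpens the lower bound from $\tfrac{13}{12}n$ to $\tfrac{7}{6}n$. There is thus no proof here against which to compare your proposal.

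Regarding the proposal on its own merits: it is a plan, not a proof. The upper-bound idea (partition into copies of $K_6$ and show Waiter can force a $K_3$-factor of $K_6$ in seven rounds) is a natural route to $\tfrac{7}{6}n+o(n)$, and the key sub-claim on $K_6$ is a finite verification; but you have not actually carried it out, and as you acknowledge the pairing has to be chosen with care, so this remains an assertion rather than an argument. Your lower-bound sketch has a genuine gap: you never specify the weight function or the constant $\alpha$, and a generic rule of the form ``Client always takes the edge of smaller weight increase'' yields nothing by itself, since Waiter is free to always offer two edges of equal increase. Reaching $\tfrac{13}{12}n$ requires a concrete structural invariant --- for example, Client refuses to close a triangle unless both offered edges close one, combined with a counting argument limiting how often Waiter can arrange such a double-closing pair --- and until such an invariant and its analysis are supplied, the lower-bound half is not established.
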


Further, they make a conjecture that $\tau_{WC}(\mathcal{F}_{n,K_3-\text{fac}},1 ) = \frac{7}{6}n+o(n)$. Our aim in this note is to improve the lower bound from $\frac{13}{12}n$ to $\frac{7}{6}n$ and hence to verify their conjecture.

\begin{thm}\label{major}
Assume $n$ is divisible by $3$ and large enough that Waiter wins the corresponding unbiased triangle-factor game on the edges of $K_n$. Then $$\tau_{WC}(\mathcal{F}_{n,K_3-\text{fac}},1 ) \geq \frac{7}{6}n.$$
\end{thm}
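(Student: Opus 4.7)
The plan is to construct an explicit strategy for Client guaranteeing the game lasts at least $\tfrac{7}{6}n$ rounds. Since Client acquires exactly one edge per round, the bound $\tau_{WC} \geq \tfrac{7}{6}n$ is equivalent to forcing Client's final graph $G$ (the one at the moment it first contains a $K_3$-factor) to have at least $\tfrac{7}{6}n$ edges. Because every vertex of $G$ has degree at least $2$ at that moment, this simplifies to the excess-degree inequality
\begin{equation*}
\sum_{v \in V} \bigl( d_G(v) - 2 \bigr) \;\geq\; \frac{n}{3},
\end{equation*}
so Client's task is to force the average $G$-degree to be at least $\tfrac{7}{3}$ before being pushed to close the last triangle of the factor.

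The strategy I would pursue is a greedy rule driven by a potential function that rewards low-degree vertices (which Client wants to preserve) and penalizes excess degree once it accumulates. When Waiter offers a pair $\{e_1, e_2\}$, Client selects the edge whose addition keeps the potential on its prescribed trajectory. The extremal configuration witnessing sharpness of the $\tfrac{7}{6}n$ bound is a disjoint union of $n/6$ copies of a ``bridged pair of triangles'' --- two vertex-disjoint triangles joined by a single extra edge, six vertices and seven edges, with exactly two vertices of degree $3$. The potential should be calibrated so that Client effectively forces a per-group cost at least matching this structure; in practice this means tracking a linear combination of the count of low-degree vertices, the accumulated excess degree, and the number of ``committed'' triangle-like substructures in the current $G$.

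The verification amounts to a case analysis on the degree types of the endpoints of $e_1$ and $e_2$ in Client's current graph, checking that whatever pair Waiter offers, Client has a response preserving the invariant. A separate end-game argument is required once the pool of low-degree vertices is depleted: there one argues directly, from the structure of graphs narrowly lacking a triangle factor, that by the time Waiter can force the triangle-completing edge the excess degree has already crossed $n/3$. Combined with the invariant maintained in the bulk of the game, this closes the lower bound.

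The main obstacle is tightness: since the upper bound of Clemens et al. reads $\tfrac{7}{6}n + o(n)$, the potential must be calibrated with essentially no slack, and every case in the round-by-round analysis must close exactly. Identifying the correct weights so that the bulk argument and the end-game argument glue consistently --- in particular across the transition where the count of low-degree vertices first drops below the threshold that lets Waiter dictate the pair offered --- is where I expect the real technical work to lie.
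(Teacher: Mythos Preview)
What you have written is a plan, not a proof. You correctly reformulate the target as $\sum_v (d_G(v)-2)\ge n/3$ and correctly identify the extremal configuration, but the two ingredients that would constitute an argument --- the precise potential function together with the round-by-round case analysis showing Client always has a good response, and the end-game argument once low-degree vertices run out --- are both left as future work. You yourself flag that the bound is tight up to $o(n)$, so there is no slack to absorb a loose potential; without writing down the actual weights and checking every case, nothing has been proved. As it stands I cannot verify any invariant, because none is stated.

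For comparison, the paper does not use a degree potential at all. It works at the level of connected components: a component is ``good'' once it first acquires a triangle-factor with exactly $\tfrac{4}{3}|V|-1$ edges, and an edge is \emph{crucial} if taking it would make a bad component good. Client's rule is simply to refuse a crucial edge whenever the other offered edge is not crucial. The core lemma shows that each time Waiter offers a crucial edge, the crucial component contains at least three vertices that were never previously in any crucial component; hence in a round where \emph{both} offered edges are crucial, at least six fresh vertices are consumed, so there can be at most $n/6$ such rounds. Since only those rounds create good components, at most $n/6$ good components are ever created, and a direct edge count then yields $|E(G)|\ge \tfrac{4}{3}n - \tfrac{n}{6} = \tfrac{7}{6}n$. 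This avoids any delicate potential calibration: the ``potential'' is just the number of as-yet-unused vertices, which drops by at least six per difficult round regardless of Waiter's play.
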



The rest of the paper is organized as follows:
we set up the necessary definitions and prove some easy results about these in Section \ref{section2}. After that, we prove Theorem \ref{major} in Section \ref{section3}, by giving a strategy of Client and analyzing the game when Client uses this strategy. 

\section{Good and bad connected components in the graph of Client}\label{section2}


We need the following characterization of the connected graphs that contain a triangle-factor, yet have few edges.

\begin{obs}\label{mincyclesetc}
Let $G$ be a connected graph with a triangle-factor and $|V(G)|=n_0$ (where $n_0$ clearly must be divisible by $3$). Then $|E(G)| \geq \frac{4}{3}n_0-1$. Moreover if $|E(G)| = \frac{4}{3}n_0-1$, the triangle-factor is unique, and $\frac{n_0}{3}$ triangles in this triangle-factor are the only cycles in $G$.
\end{obs}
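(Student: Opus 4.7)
The plan is to contract each triangle of the factor to a single super-vertex and reduce the lower bound to a standard spanning-tree count. First I fix a triangle-factor $T_1,\dots,T_{n_0/3}$ of $G$, which already contributes $n_0$ edges, and I form an auxiliary multigraph $H$ on the vertex set $\{T_1,\dots,T_{n_0/3}\}$ whose edges record, with multiplicity, the edges of $G$ not lying inside any $T_i$. Since $G$ is connected, $H$ is connected, so $|E(H)| \ge n_0/3 - 1$, yielding $|E(G)| \ge n_0 + (n_0/3 - 1) = 4n_0/3 - 1$.

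For the equality case I intend to exploit that $H$ is forced to be a spanning tree on $n_0/3$ vertices. The claim will be that every cycle of $G$ lies inside some $T_i$, and hence equals $T_i$. The key observation is that the \emph{cross edges} of any cycle $C$ of $G$ (the edges of $C$ between distinct $T_i$) induce a subgraph of $H$ in which every super-vertex has even degree, because each visit of $C$ to a triangle contributes one incoming and one outgoing cross edge. Since a tree contains no nonempty subgraph with all even degrees, $C$ uses no cross edges, so $C$ is contained in some $T_i$; but $T_i$ is itself a $3$-cycle and contains no other cycle, forcing $C=T_i$.

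Uniqueness of the triangle-factor then follows immediately: any triangle of $G$ is in particular a cycle, so any triangle-factor of $G$ must consist of triangles chosen from $\{T_1,\dots,T_{n_0/3}\}$, and since these already partition $V(G)$ the factor must be the original one.

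The argument is short and I expect no serious obstacle; the one step that warrants care is the parity observation in the equality case, where one needs to note that a cycle in $G$ may enter a given triangle $T_i$ more than once, but each visit contributes two cross edges at $T_i$, keeping the super-vertex's degree in the cross-edge subgraph even, so that the all-even-degree conclusion (and thus the contradiction with $H$ being a tree) is genuinely valid.
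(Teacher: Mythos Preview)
Your proof is correct and takes essentially the same route as the paper: contract the triangles of a fixed factor into an auxiliary graph $H$, use connectivity of $H$ for the bound $|E(G)|\ge \frac{4}{3}n_0-1$, and exploit that $H$ must be a tree in the equality case. The only difference is in the equality-case cycle argument---the paper observes that each cross edge is a bridge of $G$ (its removal disconnects the tree $H$, so it lies on no cycle), whereas you use the equivalent parity observation that the cross edges of a putative cycle would give a nonempty even-degree subgraph of a tree; both are short and standard.
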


\begin{proof}
We know that $G$ has at least one triangle-factor, consisting of the triangles $$T_1=a_1 b_1 c_1,...,T_{\frac{n_0}{3}}=a_{\frac{n_0}{3}} b_{\frac{n_0}{3}} c_{\frac{n_0}{3}}.$$ If $G$ contains multiple triangle-factors, pick one arbitrarily. Denote by $t$ the total number of edges in $G$ between the sets of the form $\lbrace a_i,b_i,c_i \rbrace$ and $\lbrace a_j,b_j,c_j \rbrace$ for $i \neq j$.

Now consider the auxiliary graph $H$, whose vertices are $T_1,...,T_{\frac{n_0}{3}}$ and where $T_i \sim T_j$ (for $i \neq j$) if there is at least one edge between $\lbrace a_i,b_i,c_i \rbrace$ and $\lbrace a_j,b_j,c_j \rbrace$. 

As $G$ is connected, it follows that $H$ must also be connected, which in particular implies $$t \geq |E(H)| \geq |V(H)|-1=\frac{n_0}{3}-1,$$ with equality if and only if $H$ is a tree and no two sets of the form $\lbrace a_i,b_i,c_i \rbrace$ and $\lbrace a_j,b_j,c_j \rbrace$ are connected by more than one edge in $G$.

That implies $G$ has at least $3 \times \frac{n_0}{3}=n_0$ edges within the sets $\lbrace a_i,b_i,c_i \rbrace$ (as each triangle contains three edges), and $t \geq \frac{n_0}{3}-1$ edges between the sets of the form $\lbrace a_i,b_i,c_i \rbrace$ and $\lbrace a_j,b_j,c_j \rbrace$. So overall, $$|E(G)| \geq n_0+t \geq \frac{4}{3}n_0-1,$$ as required.

Next, assume we have equality. Then we must have $t=\frac{n_0}{3}-1.$ If $G$ contained any cycle not of the form $a_i b_i c_i$, this cycle would contain two consecutive vertices $v_1, v_2$ with $v_1 \sim v_2$ in $G$ and $v_1,v_2$ not being in the same triangle of our triangle-factor.  As noted before, $t=\frac{n_0}{3}-1$ implies that $H$ is a tree and that no two sets of the form $\lbrace a_i,b_i,c_i \rbrace$ and $\lbrace a_j,b_j,c_j \rbrace$ are connected by more than one edge in $G$. But that in particular implies that every path between $v_1$ and $v_2$ in $G$ uses the edge $v_1 v_2$, which is a contradiction to $v_1,v_2$ being part of the same cycle in $G$. 

Hence, in the case of equality, only cycles in $G$ are the ones of the form $a_i b_i c_i$ for $1 \leq i \leq \frac{n_0}{3}$.
\end{proof}





Taking into account Observation \ref{mincyclesetc}, the following definition is rather natural.

\begin{defin}\label{impodef}
Consider the connected components of Client's graph. We will make a distinction between good and bad ones. When a new connected component is created in Client's graph, initially it is called bad. Now assume that Client adds the edge $ab$ to his graph. Then we update the state of the connected component of Client's graph containing the edge $ab$ as follows.
\begin{itemize}
    \item If both $a $ and $b$ are already a part of good connected components of Client's graph (whether of the same one or of different ones), we consider the connected component of Client's graph containing the edge $ab$ to be a good component.
    \item If the edge $ab$ connects good and bad components of Client's graph together, or adds a new vertex to a good component, we consider the connected component containing the edge $ab$ to be a good component.
    \item If neither of the vertices $a,b$ was a part of a good connected component before the edge $ab$ was added, but after adding the edge $ab$, the connected component $K$ of Client's graph containing the edge $ab$ has a triangle-factor and satisfies $|E(K)|=\frac{4}{3}|V(K)|-1$, we consider the connected component containing the edge $ab$ to be a good component. Moreover, in this case (and only in this case), we say that this connected component of Client's graph was declared to be good at the time when we added the edge $ab$ (or that a new good connected component of Client's graph was created).
    \item In any other case, the connected component of Client's graph containing the edge $ab$ is bad.
\end{itemize}
\end{defin}



\begin{obs}\label{keyobswaiter}
If Waiter won the game, and throughout the game, at most $\frac{n}{6}$ connected components of Client's graph were declared to be good, then final Client's graph contains at least $\frac{7}{6}n$ edges.
\end{obs}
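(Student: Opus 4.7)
The plan is to refine Observation \ref{mincyclesetc} separately on good and bad components of Client's final graph, and then bound the number of good components via the hypothesis. Since Waiter has won, each triangle of the $K_3$-factor in Client's graph is contained in a single connected component, so every component $K$ contains a triangle-factor and thus satisfies $|E(K)|\geq \tfrac{4}{3}|V(K)|-1$ by Observation \ref{mincyclesetc}.

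The main step is a ``one extra edge'' lemma for bad components: if $K$ is bad at the end of the game then $|E(K)|\geq \tfrac{4}{3}|V(K)|$. To prove it, I would look at the last edge $ab$ ever added to $K$. If either endpoint had belonged to a good component just before this edge was added, the first two bullets of Definition \ref{impodef} would have turned $K$ good, contradicting its final status; so both endpoints lay in bad components and we are in the regime of the third bullet. Because $K$ is unchanged after its last edge, it already has a triangle-factor at that moment, and if moreover $|E(K)|=\tfrac{4}{3}|V(K)|-1$, then $K$ would have been declared good — another contradiction. Hence $|E(K)|\neq \tfrac{4}{3}|V(K)|-1$, which by integrality (using that $3$ divides $|V(K)|$) gives $|E(K)|\geq \tfrac{4}{3}|V(K)|$.

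For good components I would track the quantity $G_t-D_t$, where $G_t$ is the number of good components after $t$ rounds and $D_t$ the cumulative number of declarations. Inspecting Definition \ref{impodef}, a declaration increments both $G_t$ and $D_t$ by $1$; merging two good components decreases $G_t$ by $1$ without touching $D_t$; and every other move changes neither quantity. So $G_t-D_t$ is non-increasing in $t$, and the number $c_g$ of good components at the end is at most the total number of declarations, which is at most $n/6$ by hypothesis. Summing the two bounds over components then yields
\[
|E(\text{Client's graph})|\ \geq\ \sum_{K\text{ good}}\Bigl(\tfrac{4}{3}|V(K)|-1\Bigr) + \sum_{K\text{ bad}}\tfrac{4}{3}|V(K)|\ =\ \tfrac{4}{3}n - c_g\ \geq\ \tfrac{4}{3}n-\tfrac{n}{6}\ =\ \tfrac{7}{6}n.
\]
The only non-routine ingredient is the one-extra-edge lemma for bad components, which is exactly what compensates the $-1$ per good component in Observation \ref{mincyclesetc}; the remainder is bookkeeping and arithmetic.
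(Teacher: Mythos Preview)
Your proposal is correct and follows exactly the paper's approach: bound good components by $\tfrac{4}{3}|V|-1$, bad components by $\tfrac{4}{3}|V|$, and use that the number of final good components is at most the number of declarations. The paper's proof simply asserts the bad-component bound and the inequality $k\le n/6$ without justification; your ``last edge'' argument and the $G_t-D_t$ monotonicity are precisely the details the paper leaves implicit.
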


\begin{proof}
Call $C_1,...,C_k$ the connected components of the final graph of Client that are good, and $C_{k+1},...,C_l$ the connected components of the final graph of Client that are bad. Then if $1 \leq i \leq k$, we have $|E(C_i)| \geq \frac{4}{3}|V(C_i)|-1$, while if $k+1 \leq i \leq l$, we have $|E(C_i)| \geq \frac{4}{3}|V(C_i)|$. 

As $k \leq \frac{n}{6}$, we get $$ \sum_{i=1}^{l} |E(C_i)| \geq \frac{4}{3}  \sum_{i=1}^{l} |V(C_i)|-k=\frac{4}{3}n-k \geq \frac{7}{6}n ,$$ as required.
\end{proof}

\section{Proof of Theorem \ref{major}}\label{section3}

Throughout this section, denote by $G_i$ Client's graph after $i$ rounds.

We start with the definition that will be used when describing Client's strategy later. 

\begin{defin}\label{crudef}
Edge that Waiter offers to Client is called \textit{crucial} if by choosing it to Client's graph, Client would create a new good connected component. 
\end{defin}


Using Definition \ref{impodef}, it is trivial to check that it must be the case that the two endpoints of any crucial edge are in the same connected component of Client's graph at the time it is offered. Hence, we can make the following further definition.

\begin{defin}
If Client is offered crucial edge $ab$ in the $i$th round, we say that the connected component of $G_{i-1}$ containing $a b$ is \textit{crucial in the $i$th round}. 
\end{defin}

Client will pick the edges according to the following strategy.

\begin{str}\label{strat}
Client considers the following two possibilities.
\begin{itemize}
    \item If one of the two edges that he is offered is crucial, while the other edge offered is not crucial, he picks to Client's graph the edge that is not crucial. 
    \item In any other case, he picks one edge arbitrarily. 
\end{itemize}
\end{str}

Moreover, we call any round when Client is offered two crucial edges \textit{difficult}.

In this section, we will work towards the following result.

\begin{prop}\label{whatweinfactshow}
If Client plays according to Strategy \ref{strat}, he can ensure that, throughout the game, at most $\frac{n}{6}$ good connected components were created. 
\end{prop}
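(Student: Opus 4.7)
The plan is to establish a tight correspondence between declared good components and difficult rounds under Strategy \ref{strat}, and then to bound the number of difficult rounds by $n/6$ via a vertex-counting argument with amortisation. A useful preliminary ingredient is a structural description of crucial edges: if $ab$ is crucial at the start of round $i$ and both endpoints lie in the component $K$ of $G_{i-1}$, then bullet three of Definition \ref{impodef} forces $K+ab$ to have a triangle-factor with $|E(K+ab)|=\tfrac{4}{3}|V(K+ab)|-1$, so Observation \ref{mincyclesetc} makes $K+ab$ extremal: its triangle-factor is unique and its triangles are the only cycles. Since $K$ itself is connected, $ab$ cannot be a bridge of $K+ab$, so $ab$ must be a triangle edge; removing it leaves $K$ as a tree of triangles with exactly one triangle broken into a path of length two. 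In particular $|V(K)|$ is a positive multiple of three, and the completing edge is uniquely determined by the structure of $K$.

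Under Strategy \ref{strat}, bullet three of Definition \ref{impodef} can fire only in a difficult round: in a round with no crucial edge offered, no completing edge enters Client's graph; in a round with exactly one crucial edge offered, Strategy \ref{strat} directs Client to take the non-crucial edge, so the crucial one passes to Waiter's graph and merely kills the corresponding almost-good component. Hence the number of declared good components equals the number of difficult rounds, and Proposition \ref{whatweinfactshow} reduces to showing that at most $n/6$ difficult rounds occur. In each difficult round $j$, write $K_1^{(j)}$ for the almost-good component Client completes to a good component, and $K_2^{(j)}$ for the one whose completing edge Waiter claims; these are distinct components of $G_{i-1}$ because each almost-good component has a unique completing edge. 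The triggering edge of a new good declaration cannot have an endpoint in any previously declared good component, so the sets $V(K_1^{(j)})$ are pairwise disjoint with $|V(K_1^{(j)})|\geq 3$, which already yields the weaker bound $k\leq n/3$.

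To sharpen to $k\leq n/6$, I would argue that $V(K_1^{(j)})\cup V(K_2^{(j)})$ contributes at least six \emph{fresh} vertices to each round $j$, where fresh means not used in any earlier difficult round. Since $V(K_1^{(j)})$ and $V(K_2^{(j)})$ are disjoint, it suffices to certify that each of them contains at least three fresh vertices. The main obstacle, and the subtlety distinguishing $n/6$ from the straightforward $n/3$, is that the vertices of a former $V(K_2^{(j')})$ can in principle be absorbed into a new almost-good component at a later round. The plan is to use the structural description: the (unique) completing edge of $K_2^{(j')}$ now lies permanently in Waiter's graph, so the broken triangle of $K_2^{(j')}$ can never again be closed inside Client's graph, and consequently any future almost-good component reusing these vertices must split them across different triangles of its triangle-factor, forcing in at least three additional vertices. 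Packaging this as a potential function on Client's current configuration --- one bounded above by $n$ that increases by at least six at every difficult round --- is the technical heart of the argument, and together with Observation \ref{keyobswaiter} it then delivers Theorem \ref{major}.
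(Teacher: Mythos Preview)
Your high-level architecture is the same as the paper's: under Strategy~\ref{strat} a good component is declared exactly in the difficult rounds, and it then suffices to find six ``fresh'' vertices per difficult round, three from each of the two crucial components involved. The paper formalises freshness via the disjoint sets $B_i$ and proves the key estimate $|V(C)\cap B_i|\geq 3$ (Lemma~\ref{essential1}) for any crucial component $C$.

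The genuine gap in your proposal is precisely this last inequality. Your heuristic handles only the case where the current crucial component $C$ reuses the vertices of a \emph{single} earlier killed crucial component $K_2^{(j')}$: you correctly observe that the missing triangle of $K_2^{(j')}$ can never be completed, so three extra vertices are needed. But you do not address the possibility that those three extra vertices are themselves \emph{not fresh}---they could come entirely from other earlier killed crucial components $K_2^{(j'')}$, $K_2^{(j''')}$, etc. In other words, the hard case is $V(C)=\bigcup_j D_j$ where the $D_j$ are several disjoint maximal earlier crucial vertex sets, and there is no reason a priori why a tree-of-triangles structure on $V(C)$ could not be assembled in this way. Your potential-function sentence acknowledges this is the technical heart but does not supply the argument.

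The paper resolves this case as follows: for each $D_j$ the two ``broken'' vertices $p_j,q_j$ (whose joining edge lies in Waiter's graph) must each be adjacent in $C'$ to some vertex outside $D_j$, and moreover to vertices lying in \emph{different} $D_k$'s (otherwise one obtains a cycle of length $\geq 4$ in $C'$, contradicting Observation~\ref{mincyclesetc}). Building an auxiliary graph $I$ on $\{D_1,\dots,D_s\}$ from these cross-edges, one gets $\delta(I)\geq 2$, hence $I$ contains a cycle, which again yields a long cycle in $C'$ and a contradiction. This cycle-in-the-auxiliary-graph argument is the missing idea in your proposal.
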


Theorem \ref{major} then follows by applying Observation \ref{keyobswaiter}.

We start by proving an easy lemma.

\begin{lemma}\label{easytech}
Assume that $C$ is a bad connected component of Client's graph. Then there exists at most one (yet unclaimed) edge with its endpoints in $C$ that, if offered by Waiter to Client, would be crucial.
\end{lemma}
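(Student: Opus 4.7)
The plan is to argue by contradiction. Suppose $ab$ and $cd$ are two distinct unclaimed edges with both endpoints in $C$, each of which would be crucial if offered. Since $C$ is bad and a crucial move must create a new good component, only the third bullet of Definition \ref{impodef} applies in each case, so both $C+ab$ and $C+cd$ are connected, admit a triangle-factor, and have exactly $\tfrac{4}{3}n_0-1$ edges, where $n_0=|V(C)|$. In particular $|E(C)|=\tfrac{4}{3}n_0-2$.

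Next I would invoke the equality clause of Observation \ref{mincyclesetc} applied to $C+ab$: its triangle-factor $\mathcal{T}_1$ is unique and its $n_0/3$ triangles are the only cycles of $C+ab$. The edge $ab$ must lie in some triangle $T_1\in\mathcal{T}_1$, since otherwise $\mathcal{T}_1\subseteq E(C)$ would give $C$ itself a triangle-factor, and Observation \ref{mincyclesetc} applied to $C$ would force $|E(C)|\ge \tfrac{4}{3}n_0-1$, contradicting the edge count above. Because the cycles of $C$ are exactly the cycles of $C+ab$ that avoid $ab$, they coincide with $\mathcal{T}_1\setminus\{T_1\}$. The same analysis applied to $C+cd$ produces a unique triangle-factor $\mathcal{T}_2$, a triangle $T_2\in\mathcal{T}_2$ containing $cd$, and identifies the cycles of $C$ with $\mathcal{T}_2\setminus\{T_2\}$.

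Comparing the two descriptions of the cycles of $C$ gives $\mathcal{T}_1\setminus\{T_1\}=\mathcal{T}_2\setminus\{T_2\}$. Since each $\mathcal{T}_i$ partitions $V(C)$ into triples, this forces $V(T_1)=V(T_2)$. Hence both $\{a,b\}$ and $\{c,d\}$ lie inside the same $3$-element vertex set, whose three pairwise edges, viewed inside $C+ab$, consist of $ab$ together with the two edges of $T_1$ that lie in $C$. Since $cd$ is unclaimed, $cd\notin E(C)$, so the only possibility is $cd=ab$, contradicting $ab\ne cd$.

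The delicate step is the bookkeeping that identifies the cycles of $C$ with $\mathcal{T}_1\setminus\{T_1\}$, and symmetrically with $\mathcal{T}_2\setminus\{T_2\}$; this leans on the strong uniqueness-of-cycles clause (not just uniqueness of the triangle-factor) in Observation \ref{mincyclesetc}, so I would take care to cite that part explicitly. Once that identification is in place, the two triangle-factors are pinned down enough that the contradiction is obtained by inspecting a single triangle.
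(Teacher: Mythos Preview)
Your proof is correct, and it takes a somewhat different route from the paper. The paper argues directly about the structure of $C$: it performs a case analysis on how many vertices of $C$ are not yet contained in any triangle of Client's graph, rules out every case except ``exactly three such vertices $x_1,x_2,x_3$'', and then observes that any crucial edge must be the unique missing edge of the triangle $x_1x_2x_3$. You instead assume two crucial edges $ab$ and $cd$ exist and compare the two good graphs $C+ab$ and $C+cd$: the ``only cycles are the triangle-factor triangles'' clause of Observation \ref{mincyclesetc} identifies the cycles of $C$ from both sides, forcing the two triangle-factors to agree except on one common triple $V(T_1)=V(T_2)$, and then the only non-$C$ edge on that triple is simultaneously $ab$ and $cd$. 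Your argument leans more heavily (and more cleanly) on the cycle-uniqueness part of Observation \ref{mincyclesetc} and is pleasantly symmetric in the two hypothetical edges; the paper's argument is a bit more hands-on about the local picture inside $C$. Both are short and either would serve.
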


\begin{proof} Assume some such edge exists. We will show it is unique.

If every vertex of $C$ is already in some triangle of Client's graph, then by Definition \ref{impodef} we can't have a crucial edge for $C$. If more than three vertices of $C$ are not in any triangle of Client's graph, we can't have a crucial edge for $C$ either, since by adding just one edge we can't create a triangle-factor of $C$. Finally, if precisely one or two vertices of $C$ are not in any triangle of Client's graph yet, we can't have a crucial edge for $C$. Since if we did and this edge created some new triangles in $C$ (which it must), at least one vertex of $C$ would be in at least two triangles after adding this edge, contradicting Observation \ref{mincyclesetc}.

So precisely three vertices $x_1,x_2,x_3$ of $C$ are not in any triangle of Client's graph yet. By Observation \ref{mincyclesetc}, any edge we add into $C$ that will make it into a good connected component must complete a triangle $x_1 x_2 x_3$. But then it must be the case that the triangle $x_1 x_2 x_3$ misses precisely one edge in $C$, and this missing edge thus must be our crucial edge.
\end{proof}

The heart of our proof of Proposition \ref{whatweinfactshow} is Lemma \ref{essential1}. Before stating it, we need the following definition.

\begin{defin}
Let $A_i$ be the subset of $V(K_n)$ consisting of the vertex sets of the connected components of $G_{i-1}$ that were crucial in the $i$th round. Let $B_i = A_i \setminus \bigcup_{j=1}^{i-1} A_j$.
\end{defin}

We emphasize that for $v \in A_i$, we genuinely need Waiter to offer Client in the $i$th round some crucial edge $ab$ with the endpoints $a,b$ being in the same connected component of $G_{i-1}$ as $v$, and that it is not enough if such an edge simply exists but Waiter does not offer it to Client in the $i$th round.

\begin{lemma}\label{essential1}
Assume that Waiter offered Client some crucial edge in the $i$th round, and let $C$ be the crucial connected component of $G_{i-1}$ containing the endpoints of this edge. Then $|V(C) \cap B_i| \geq 3$.
\end{lemma}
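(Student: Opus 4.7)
The plan is to reduce Lemma \ref{essential1} to a counting inequality on the block structure of $C$. For $v \in V(C)$, we have $v \in A_j$ for some $j < i$ precisely when $v$ lies in some previously crucial component $C''$, and in that case $V(C'') \subseteq V(C)$, since Client's connected components only grow over time. Letting $D_1, \ldots, D_r$ denote the maximal previously crucial components with $V(D_s) \subseteq V(C)$, the sets $V(D_s)$ are pairwise disjoint (any two Client-components at earlier rounds are either disjoint or one contains the other). Writing $|V(C)| = 3m$ and $|V(D_s)| = 3 m_s$, this yields $V(C) \cap B_i = V(C) \setminus \bigcup_s V(D_s)$, so the lemma reduces to showing $\sum_s m_s \leq m - 1$.

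To prove this, I would count tree edges in the block decomposition of $C$. By the same analysis underlying Observation \ref{mincyclesetc} and Lemma \ref{easytech}, $C$ in $G_{i-1}$ decomposes into $m$ blocks --- the almost-triangle $B_C := \{x_1, x_2, x_3\}$ together with the $m - 1$ triangles of its future triangle-factor --- joined in a tree by exactly $m - 1$ inter-block \emph{tree edges}. Each crucial component $D_s$ in $G_{j_s - 1}$ admits the same kind of decomposition, with $m_s$ blocks and $m_s - 1$ tree edges, and three lonely vertices $y_1^s, y_2^s, y_3^s$ whose Waiter-claimed crucial edge is $y_1^s y_2^s$. Since the triangles of $D_s$ are cycles in $G_{i-1}$, they persist as triangles of $C$'s decomposition; and since each tree edge of $D_s$ joins two distinct blocks of $D_s$ (neither containing any lonely $y_\ell^s$), the corresponding endpoints lie in distinct blocks of $C$ as well, making it a tree edge of $C$. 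This already gives $m_s - 1$ tree edges of $C$ from each $D_s$, with contributions from different $D_s$ disjoint because the $V(D_s)$ are.

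The key extra step is that each $D_s$ forces at least one additional tree edge of $C$ via its partial-triangle edges $y_1^s y_3^s$ and $y_2^s y_3^s$. Client cannot have picked the crucial edge $y_1^s y_2^s$ in round $j_s$: otherwise $D_s$ would have been declared good and this goodness would propagate to $C$, contradicting that $C$ is bad. Hence $y_1^s y_2^s$ lies permanently in Waiter's graph. This rules out $y_1^s$ and $y_2^s$ sharing any triangle block of $C$ (which would force $y_1^s y_2^s \in G_{i-1}$) and also rules out both lying in $B_C$ (which would force the round-$i$ crucial edge to equal $y_1^s y_2^s$, impossible since Waiter already owns it). Therefore $y_1^s, y_2^s, y_3^s$ span at least two distinct blocks of $C$, and since $y_3^s$ lies in a unique block, the only way for both $y_1^s y_3^s$ and $y_2^s y_3^s$ to be internal to blocks is for all three vertices to sit in that common block, which we have just excluded. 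So at least one of these two edges is an inter-block tree edge of $C$.

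Putting everything together, each $D_s$ contributes at least $(m_s - 1) + 1 = m_s$ distinct tree edges of $C$ whose endpoints lie in $V(D_s)$, and disjointness of the $V(D_s)$ makes these contributions globally disjoint. Since $C$ has only $m - 1$ tree edges in total, we conclude $\sum_s m_s \leq m - 1$, whence $|V(C) \cap B_i| = 3m - 3 \sum_s m_s \geq 3$. The main obstacle is recognising that the block-tree structure is the right potential to count; once that is in place, the delicate point is verifying that the Waiter-owned edge $y_1^s y_2^s$ genuinely prevents the three lonely vertices of $D_s$ from collapsing into a single block of $C$ (both the "triangle block" and the "$B_C$ block" cases needing separate short arguments).
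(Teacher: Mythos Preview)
Your proof is correct and takes a genuinely different route from the paper's. Both arguments begin identically: extract the maximal previously-crucial subcomponents $D_1,\ldots,D_s$ inside $V(C)$, observe they are pairwise disjoint, and reduce the lemma to showing $V(C)\neq\bigcup_j D_j$. From there the paper argues by contradiction in $C'$ (the good component obtained by adding the round-$i$ crucial edge): it shows that for each $j$ the vertices $p_j,q_j$ must send edges of $C'$ into \emph{distinct} other $D_k$'s (Claim~\ref{cm2}), builds an auxiliary graph $I$ on $\{D_1,\ldots,D_s\}$ with minimum degree at least $2$, and extracts from the resulting cycle in $I$ a cycle of length $\geq 4$ in $C'$, contradicting Observation~\ref{mincyclesetc}. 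Your argument instead counts directly in $C$: there are exactly $m-1$ inter-block tree edges, and each $D_s$ accounts for at least $m_s$ of them --- its own $m_s-1$ tree edges, plus at least one of the almost-triangle edges $y_1^sy_3^s,\,y_2^sy_3^s$, forced to be inter-block in $C$ because the Waiter-owned edge $y_1^sy_2^s$ prevents $y_1^s,y_2^s$ from sharing a block of $C$. Disjointness of the $V(D_s)$ then gives $\sum_s m_s\leq m-1$ with no contradiction step and no auxiliary graph; this is shorter and more transparent than the paper's route, and the key insight (that the Waiter-owned crucial edge of each $D_s$ forces an extra tree edge of $C$) plays the same role as the paper's Claim~\ref{cm2}.

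One small repair is needed: your parenthetical ``(neither containing any lonely $y_\ell^s$)'' is not correct --- a tree edge of $D_s$ may well have an endpoint in the almost-triangle block $\{y_1^s,y_2^s,y_3^s\}$ (for instance when $m_s=2$ the unique tree edge must do so). The conclusion that every tree edge of $D_s$ remains a tree edge of $C$ still holds, though: if one endpoint lies in a triangle $T$ of $D_s$ then it lies in the block $T$ of $C$, while the other endpoint (whether in another triangle of $D_s$ or among the $y_\ell^s$) is simply not a vertex of $T$ and hence sits in a different block of $C$. With this one-line patch to the justification, the argument goes through as written.
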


\begin{proof}
Let $C_1,...,C_r \subset V(C)$ be the following subsets. We include in this collection any $C_j \subset V(C)$ that at some point was a vertex set of the crucial connected component of Client's graph (note that for all of these, Client did not choose the corresponding crucial edge to his graph though, and instead it went to the graph of Waiter - else $C$ would be good by Definition \ref{impodef}).

Next, let $D_1,...,D_s \subset V(C)$ be the same collection as $C_1,...,C_r$, except that we delete any $C_j$ for which we can find $k \neq j$ with $C_j \subset C_k$.

As the proof of Lemma \ref{essential1} is rather long, we shall have several claims throughout to keep the structure of the proof of Lemma \ref{essential1} as clear as possible.

\begin{claim}\label{cm1}
$D_1,...,D_s$ are disjoint subsets of $V(C)$.
\end{claim}

\begin{proof}[Proof of Claim \ref{cm1}]
Assume $v \in D_j \cap D_k$ for some $v \in V(C)$ and some $1 \leq j, k \leq s$ with $j \neq k$. Assume also that Waiter offered the crucial edge for $D_j$ before offering the crucial edge for $D_k$ (she clearly could not have offered both at the same time, as then $D_j,D_k$ would be the same set because their intersection is non-empty). Let $w$ be any other vertex of $D_j$. Since $v,w$ were in the same connected component of Client's graph at the time when $D_j$ was a vertex set of a crucial component, they will stay in the same connected component forever after, and in particular as $v \in D_k$, we also have $w \in D_k$. As $w$ was arbitrary, that implies $D_j \subset D_k$, which is a contradiction to our definition of the sets $D_1,...,D_s$.
\end{proof}

Let $$X=V(C) \setminus \bigcup_{j=1}^{s} D_j.$$ Then clearly $X \subset V(C) \cap B_i$. Also, by the definition of a crucial connected component and by Claim \ref{cm1}, both $|V(C)|$ and $|\bigcup_{j=1}^{s} D_j|=\sum_{j=1}^{s}|D_j|$ are divisible by $3$. So if we can show that $|X|>0$, that immediately implies that $|X| \geq 3$ and proves Lemma \ref{essential1}. 

Assume for contradiction that $X= \emptyset$ and hence $V(C)=\bigcup_{j=1}^{s} D_j$. First we rule out the case $s=1$.

\begin{claim}\label{cm3}
We have $s \geq 2$.
\end{claim}

\begin{proof}[Proof of Claim \ref{cm3}]
Assume for contradiction that we have $s=1$ and $V(C)=D_1$. At the first time that $V(C)$ was a vertex set of a crucial connected component $C_0$ of Client's graph, by Observation \ref{mincyclesetc} we must have had $|E(C_0)|=\frac{4}{3}|V(C)|-2$. But by Lemma \ref{easytech}, the crucial edge for $C_0$ was unique, and hence now we must have $$|E(C)| \geq |E(C_0)|+1=\frac{4}{3}|V(C)|-1.$$
But that contradicts $C$ being a crucial connected component of Client's graph.
\end{proof}

For $j=1,...,s$, let $p_j q_j$ be the crucial edge offered when we had a crucial connected component with a vertex set $D_j$, and let $r_j$ be the vertex that $p_j,q_j$ would have formed a triangle with in Client's graph at that time, had Client taken $p_j q_j$ to his graph. Let $C'$ be a connected component we obtain if Client picks a crucial edge with the endpoints in $C$ to his graph in the $i$th round. Clearly $V(C)=V(C')$.

For $v \in V(C)$ belonging to some $D_j$, denote by $T(v)$ the set of all the sets $D_k$ with $k \neq j$ that $v$ is connected to in $C'$.

\begin{claim}\label{cm2}
Take $p_j,q_j$ for any $1 \leq j \leq s$. Then $T(p_j),T(q_j) \neq \emptyset$ and moreover $T(p_j) \cap T(q_j)= \emptyset$.
\end{claim}

\begin{proof}[Proof of Claim \ref{cm2}]
We know that $p_j$ is a vertex of a triangle $p_j v_1 v_2$ in $C'$, for some $v_1,v_2 \in V(C)$. We know that $q_j \notin \lbrace v_1,v_2 \rbrace $, since the edge $p_j q_j$ belongs to Waiter's graph (as it was offered previously to Client, but Client did not take it). But we also know $$\lbrace v_1,v_2 \rbrace \cap D_j \subset \lbrace q_j,r_j \rbrace,$$ since any other vertex of $D_j$ was in some triangle in Client's graph already at the time when $D_j$ was a vertex set of a crucial connected component, and by Observation \ref{mincyclesetc} every vertex of $C'$ is in precisely one triangle in Client's graph. Hence there must exist some $k \neq j$ with $\lbrace v_1,v_2 \rbrace \cap D_k \neq \emptyset$, and $T(p_j) \neq \emptyset$ follows.

We derive $T(q_j) \neq \emptyset$ analogously.

Finally, assume for contradiction that $T(p_j) \cap T(q_j) \neq \emptyset$. Take $D_k$ such that $D_k \in T(p_j) \cap T(q_j)$. Then we have $w_1,w_2 \in D_k$ such that $p_j \sim w_1$ and $q_j \sim w_2$ in $C'$ (it may happen that $w_1,w_2$ are the same vertex). Let $w_1 z_1 \dots z_u w_2$ be a path between $w_1$ and $w_2$ in $D_k$. Then $p_j w_1 z_1 \dots z_u w_2 q_j r_j$ is a cycle of length at least four in $C'$, which by Observation \ref{mincyclesetc} gives a contradiction to $C'$ being a good connected component.
\end{proof}




Now consider $$S_0= \lbrace ab: ab \in E(C'); \, \, a,b \text{ are in the different sets of the form } D_j \rbrace.$$ 
We will modify this set repeatedly as follows. As long as $S_k$ contains any edge $ab$ such that there are also both some edge $ab'$ for $b' \neq b$ and some edge $a'b$ for $a' \neq  a$ in $S_k$, erase some such edge $ab$ from $S_k$ to form the set $S_{k+1}$. This process eventually terminates with some final set $S_{\text{final}}$. Write $S=S_{\text{final}}$.

Let $I$ be the following auxiliary graph. Its vertices are $D_1,...,D_s$ and $D_j \sim D_k$ in $I$ if there is at least one edge going between $D_j$ and $D_k$ in $S$. 

\begin{claim}\label{cm4}
The minimum degree of $I$ satisfies $\delta(I) \geq 2$.
\end{claim}

\begin{proof}[Proof of Claim \ref{cm4}]
Consider any $j$, $1 \leq j \leq s$. By Claim \ref{cm2}, $S_0$ contained at least one edge of the form $p_j a$ and at least one edge of the form $q_j b$ for some $a,b \in V(C)$. Moreover, by the definition of the process by which we obtained $S$ from $S_0$, we can never erase the last edge of either of these two forms from $S_k$ when creating $S_{k+1}$. Hence $S$ also contains at least one edge of the form $p_j a'$ and at least one edge of the form $q_j b'$ for some $a',b' \in V(C)$. 
Finally, by Claim \ref{cm2}, $a'$ and $b'$ can not be in the same set $D_k$, giving $d_I(D_j) \geq 2$. As $j$ was arbitrary, the result follows.
\end{proof}

 But Claim \ref{cm4} in particular implies that $I$ contains some cycle. That in turn implies that $C'$ contains some cycle which contains at least three edges from $S$. But by the construction of $S$, any three different edges of $S$ have at least four different endpoints in total. So $C'$ contains a cycle of length at least four, contradicting Observation \ref{mincyclesetc}.

Thus the proof of Lemma \ref{essential1} is finished.
\end{proof}


\begin{cor}\label{corwaiter}
If $i$ is a difficult round, then $|B_i| \geq 6 $.
\end{cor}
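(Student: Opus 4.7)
The plan is to reduce Corollary \ref{corwaiter} to two applications of Lemma \ref{essential1}, after first showing that in a difficult round the two crucial edges must live in two distinct crucial components of $G_{i-1}$. First I would observe that any crucial connected component is necessarily bad: indeed, by Definition \ref{impodef} a new good component is declared only when an edge is added whose two endpoints lie in a component none of whose vertices is already in a good component, so the crucial edge's endpoints sit in a bad component of $G_{i-1}$. Combined with Lemma \ref{easytech}, which bounds the number of unclaimed edges inside a bad component that would be crucial if offered by at most one, this forces the two crucial edges offered in a difficult round to have their endpoints in two \emph{different} crucial components $C^{(1)}, C^{(2)}$ of $G_{i-1}$.

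Next I would use the fact that $C^{(1)}$ and $C^{(2)}$ are distinct connected components of $G_{i-1}$, so their vertex sets are disjoint subsets of $V(K_n)$. Applying Lemma \ref{essential1} separately to each of them (both satisfy its hypothesis by construction, since both are crucial components containing the endpoints of a crucial edge offered in round $i$) gives $|V(C^{(1)}) \cap B_i| \geq 3$ and $|V(C^{(2)}) \cap B_i| \geq 3$. Summing and using disjointness yields
\[ |B_i| \geq |V(C^{(1)}) \cap B_i| + |V(C^{(2)}) \cap B_i| \geq 6, \]
as required.

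The only subtle point, and really the heart of this short argument, is the initial reduction to the two-component case: one must check both that crucial components are bad (so that Lemma \ref{easytech} applies) and that a single bad component cannot host two crucial edges simultaneously. I do not expect any additional technical obstacles, since everything else is a direct invocation of the preceding lemma and disjointness of distinct connected components.
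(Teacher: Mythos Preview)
Your proposal is correct and follows essentially the same route as the paper: use Lemma~\ref{easytech} to see that the two crucial edges in a difficult round lie in distinct (bad) components of $G_{i-1}$, then apply Lemma~\ref{essential1} to each and add. You are in fact a bit more careful than the paper, which invokes Lemma~\ref{easytech} without explicitly remarking that crucial components are bad.
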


\begin{proof}
By Lemma \ref{easytech}, we know that the two crucial edges offered in the difficult round could not have been in the same connected component of $G_{i-1}$. Result then follows by applying Lemma \ref{essential1}.
\end{proof}

We are now ready to prove Proposition \ref{whatweinfactshow}, and hence as discussed before also complete the proof of Theorem \ref{major}.

\begin{proof}[Proof of Proposition \ref{whatweinfactshow}]
Assume Client creates $k$ good connected components in his graph throughout the game, and that overall the game lasted $T$ rounds (where $T \geq k$, since Client can create at most one good connected component in each round). That implies there were at least $k$ difficult rounds, as Client does not create good connected components in any other round. But then using Corollary \ref{corwaiter} and the fact that $B_i \cap B_j = \emptyset$ whenever $i \neq j$, we get $$n \geq |\bigcup_{i=1}^{T} B_i|=\sum_{i=1}^{T} |B_i| \geq 6k.$$

It follows that $k \leq \frac{n}{6}$, as required.
\end{proof}

This resolves the triangle-factor game fully. As suggested by Clemens et al. \cite{clemens2020fast}, it may also be interesting to consider the general $K_k$-factor game instead of just the case $k=3$. It is not hard to show that Waiter still wins this game, but non-trivial lower and upper bounds for $\tau_{WC}(\mathcal{F}_{n,K_k-\text{fac}},1 )$ would definitely be of interest.

\section*{Acknowledgements}

The author would like to thank his PhD supervisor professor B\'ela Bollob\'as for his support.  

\bibliographystyle{abbrv}
\bibliography{sample}

\end{document}